\theoremstyle{plain}
\newtheorem{theo}{Theorem}[section]
\newtheorem{lemm}[theo]{Lemma}
\theoremstyle{definition}
\newtheorem{defi}[theo]{Definition}
\newtheorem{assum}{Main Assumption\!\!}
\DeclareMathAlphabet{\mathrmsl}{OT1}{cmr}{m}{sl}
\renewcommand{\leq}{\leqslant}
\renewcommand{\geq}{\geqslant}
\newcommand{\TM}{{\mathbb T}}
\newcommand{\RM}{{\mathbb R}}
\newcommand{\ZM}{{\mathbb Z}}
\newcommand{\mV}{{\mathbb V}}
\newcommand{\mW}{{\mathbb W}}
\newcommand{\vol}{\operatorname{vol}}
\newcommand{\Scal}{\operatorname{Scal}}
\newcommand{\tr}{\operatorname{tr}}
\newcommand{\ptr}{\operatorname{p-tr}}
\newcommand{\Spon}{\operatorname{SO}}
\newcommand{\Spin}{\operatorname{Spin}}
\renewcommand{\Im}{\operatorname{Im}}
\newcommand{\diver}{\operatorname{div}}
\newcommand{\son}{\mathfrak{so}(n)}
\newcommand{\proofof}[1]{\end{#1}\begin{proof}}
\newcounter{mnotecount}[section]
\renewcommand{\themnotecount}{\thesection.\arabic{mnotecount}}
\newcommand{\mnote}[1]
{\protect{\stepcounter{mnotecount}}$^{\mbox{\footnotesize  $
      \bullet$\themnotecount}}$ \marginpar{\raggedright\tiny\em
    $\!\!\!\!\!\!\,\bullet$\themnotecount: #1} }
\begin{document}
\title{Universal positive mass theorems}
\author{Marc Herzlich}
\subjclass[2010]{53B21, 53A55, 58J60, 83C30}
\keywords{Asymptotically flat manifolds, positive mass theorem, Stein-Weiss operators, Weitzenb\"ock formulas}
\thanks{The author is supported in part by the project ANR-12-BS01-004 `Geometry and topology of open manifolds' of the French National Agency for Research.}
\address{Universit\'e de Montpellier\\ Institut Montpelliérain Alexander Grothen\-dieck\\ UMR 5149 CNRS -- UM\\ Montpellier\\ France} 
\email{marc.herzlich@umontpellier.fr}
\date{\today}
\begin{abstract} 
In this paper, we develop a general study of contributions at infinity of Bochner-Weitzenb\"ock-type formulas on asymptotically flat manifolds, inspired by Witten's proof of the positive mass theorem. As an application, we show that similar proofs can be obtained in a much more general setting as any choice of an irreducible natural bundle and a very large choice of first-order operators may lead to a positive mass theorem along the same lines if the necessary curvature conditions are satisfied.
\end{abstract}

\maketitle

\section*{Introduction}

Mass is the most fundamental invariant of asymptotically flat manifolds. Originally defined by physicists in
General Relativity, it has played a leading role in conformal geometry 
\cite{lp} or $3$-dimensional Riemannian geometry \cite{akutagawa-neves,bray-neves}. The most important 
feature of mass is its positivity in the presence of nonnegative scalar curvature and the subsequent rigidity
statement (zero mass implies flatness).
The positive mass theorem was proved first by Schoen and Yau in dimensions between $3$ and $7$ using minimal surfaces \cite{SY1}, and Witten introduced in 1981 a new method, based on spinors and a Bochner-type formula for the Dirac operator usually known as the Lichnerowicz-Schr\"odinger formula \cite{witten}.
Its efficiency made it usable in a variety of other contexts: for instance, a lot of generalizations of mass have been introduced 
in the recent years, and almost all statements of their positivity were proved along Witten's lines, 
see \cite{mh-ptc, dai, maerten, maerten-minerbe, minerbe-alf-mass}.

The most striking feature of Witten's proof is the appearance of the mass as the boundary-at-infinity contribution in the Lichnerowicz-Schr\"odinger formula. The goal of this paper is to try to analyze further this idea, by devising a general way to compute the boundary-at-infinity contribution for a
very general class of Bochner-type formulas. We shall then show that Witten's argument may be applied to a very large class of natural bundles and operators, similar to the Dirac operator and giving birth to a Bochner-type formula, and that there is \emph{always} a connection between the boundary-at-infinity contribution and the mass. This may lead to positive (or negative, sometimes) mass theorems in some special cases. The main motivation of the present paper also lies in the methods. Our hope is that the analysis performed here for the mass may find useful applications, either in problems where similar boundary-at-infinity contributions play a role, or in the study of newly defined asymptotic invariants for which positivity statements have not been proven in full generality so far.

\subsection*{Acknowledgements} The author is grateful to Julien Cortier and Mattias Dahl for numerous discussions on the subject of this paper. He is also grateful to the referee for his very careful reading of the paper, which led to many improvements.
 
\section{Asymptotically flat manifolds, mass, and the Witten argument}\label{sec:witten}

In all that follows, the dimension $n$ 
of the manifolds considered will be taken to be at least $3$. We shall restrict ourselves to manifolds with only one end, but the definitions can straightforwardly be extended to the general case.

\begin{defi}\label{def_aflat}
An asymptotically flat manifold is a complete Riemannian manifold $(M,g)$ of dimension $n\geq 3$ such that 
there exists a diffeomorphism $\Phi$ from the complement of a compact set $K$ in $M$ into the complement of a 
ball $B$ in $\RM^n$ (called a \emph{chart at infinity}), such that, in these coordinates,
$$ |g_{ij} - \delta_{ij} | = O(r^{-\tau}), \quad |\partial_kg_{ij}| = O(r^{-\tau-1}), 
\quad |\partial_k\partial_{\ell}g_{ij}| = O(r^{-\tau-2}) $$
for some real $\tau>0$, called the \emph{order} of asymptotic flatness.
\end{defi}

Asymptotically flat manifolds are natural objects in General Relativity, where they are used as models for isolated spacetimes with 
gravitational interactions vanishing at infinity. Mass has then been introduced in the physics literature as a way to measure the
total amount of matter contained in the spacetime.

\smallskip

\begin{defi}
Let $(M,g)$ be an asymptotically flat manifold of dimension $n\geq 3$ and order $\tau>0$.
If $\tau>\tfrac{n-2}{2}$ and the scalar curvature of $g$ is integrable, then the quantity
\begin{equation}\label{defn_mass} 
m(g) = \lim_{r\to\infty} \int_{S_r} (\diver_0 g - d \tr_0 g)(\nu)\,d\!\vol_{s_r} 
\end{equation}
(where $\nu$ denotes the field of outer unit normals to the coordinate spheres $S_r$ and the subscript $\cdot_0$ 
refers to the euclidean metric in a given chart at infinity) exists and is 
independent of the chosen chart \cite{ba}. It is called the \emph{mass} of the
asymptotically flat manifold $(M,g)$.
\end{defi}

\smallskip

The positive mass theorem states that if the scalar curvature of $g$ is nonnegative, then its mass
is nonnegative, and it vanishes if and only if $(M,g)$ is isometric to the Euclidean space. 
Witten's approach for the proofs of these two statements can then be described as follows: given any \emph{spin} asymptotically flat 
manifold $(M,g)$ and $\phi_0$ a nonzero constant spinor on $\RM^n$, one may find (by some PDE analysis in Witten's case, but this may not be an obligation) a spinor field
$\phi$ on $(M,g)$ satisfying
\begin{equation}\label{eqn_Dirac} \mathfrak{D}\phi = 0, \quad \phi \rightarrow_{\infty} \phi_0 \end{equation}
where $\mathfrak{D}$ is the Dirac operator \cite{ba,parker-taubes,witten}. The
Lichnerowicz-Schr\"odinger formula then relates the Dirac laplacian $\mathfrak{D}^*\mathfrak{D}$ to the 
rough laplacian $\nabla^*\nabla$ and the scalar curvature:
\begin{equation}\label{lichne}  \nabla^*\nabla + \frac14\Scal - \mathfrak{D}^*\mathfrak{D} = 0. \end{equation}
After integrating over domains bounded by coordinates spheres $S_r$, performing an integration by parts, and letting
$r\to\infty$, one gets
\begin{equation}\label{lichne_int} \int_M |\nabla\phi|^2 + \frac14\Scal|\phi|^2 - |\mathfrak{D}\phi|^2 = 
\lim_{r\to\infty} \int_{S_r} b(\phi) \end{equation}
where $b$ stands here for the integrand of the boundary term. Some analysis of the equations (\ref{eqn_Dirac}) above yields that all
integrals converge, and moreover (this is the core of Witten's approach) the boundary contribution is
given at infinity by 
$$ \lim_{r\to\infty} \int_{S_r} b(\phi) \ = \ \frac{1}{4}\lim_{r\to\infty}\int_{S_r} (\diver_0 g - d \tr_0 g)(\nu)|\phi_0|^2\,d\!\vol_{s_r}
\ = \ \frac{1}{4}|\phi_0|^2 m(g) .$$ 
As the left hand side in (\ref{lichne_int}) is nonnegative since $\phi$ is a solution of the Dirac equation, this proves the first part of the positive mass theorem. The rigidity part then follows from the fact that zero mass implies the existence of a parallel spinor: the metric $g$ is then Ricci-flat, and this forces $(M,g)$ to be isometric to the Euclidean space as a simple consequence of the Bishop-Gromov comparison theorem.

Witten's remarkable idea has been used again in numerous cases. For instance, Bartnik  showed that the Dirac operator on spinors could be replaced by the Hodge-de Rham operator $d+\delta$ on $1$-forms \cite{ba}. More recently, Jammes gave a proof of the positive mass theorem in dimension $4$ using the Hodge-de Rham operator on $2$-forms \cite{jammes}. (Interestingly enough, Jammes does not need analysis to solve the equation similar to (\ref{eqn_Dirac}) that appears in his proof). Spinors were also used as the main ingredient to prove positivity of generalizations of mass with other behaviours at infinity:  asymptotically hyperbolic manifolds \cite{mh-ptc, maerten}, manifolds asymptotic to a product of a torus by some Euclidean space \cite{dai}, to a fiber bundle over some Euclidean space \cite{minerbe-alf-mass}, or K\"ahler asymptotically complex hyperbolic manifolds~\cite{maerten-minerbe}. 

In all these works, the mass (or some similar invariant when other behaviours at infinity are involved) appears in the boundary-at-infinity contribution. This should not come as a surprise: in this paper, we shall show that the mass always appears as a part of the boundary-at-infinity contribution, independently of the chosen bundle and operator, provided that some Bochner-type formula is available, see Theorem \ref{theo:main} below. Moreover, it turns out that an explicit formula can always be given in terms of representation-theoretic data of the bundle and operator involved, without too specific or difficult computations. 

The paper is organized as follows. Section \ref{sec:setting} collects all the facts needed for our analysis. Our main result is stated in Section \ref{sec:main}, and some preliminary steps towards its proof are also made there. Section \ref{sec:computation} is devoted to the technical part of the proof. Applications are then proposed in Section~\ref{sec:appli}.

\section{General setting}\label{sec:setting}

Let $(M,g)$ be a Riemannian manifold of dimension $n\geq 3$, decay order $\tau>\tfrac{n-2}{2}$ and integrable scalar curvature, so that the mass is
defined. We now consider a natural bundle $E$ on $M$ issued from an \emph{irreducible} representation 
$(\mathbb{V}, \rho)$ of the special orthogonal group $\Spon(n)$ or its universal covering $\Spin(n)$. Thus 
$E=\mathcal{P}\times_\rho\mathbb{V}$ where $\mathcal{P}$ is either the frame bundle or the spin frame bundle of $(M,g)$, 
the manifold $M$ being of course assumed to be spin in the latter case.

To give an explicit description of our results, we shall need a few classical facts of representation theory of the Lie algebra
$\son$. From now on, we shall freely identify elements of $\RM^n$ and $(\RM^n)^*$, \emph{i.e.} forms and vectors ; 
this simplification will be used repeatedly and without notice in the sequel of the paper. Thus, for any pair
of vectors $u_1$ and $u_2$, we denote by $u_1\wedge u_2$ the element of $\son$ given by $X \mapsto \langle u_1, X\rangle u_2 - 
\langle u_2,X\rangle u_1$.

Let $\{e_j\}_{1\leq j\leq n}$ be an orthonormal basis of $\RM^n$ and $\mathfrak{h}$ be the Cartan subalgebra of $\son$ generated by 
$\epsilon_k = e_{2k-1}\wedge e_{2k}$ where $k$ runs from $1$ 
to $m=\lfloor\tfrac{n}{2}\rfloor$. Any complex representation $\mV$ of $\son$ may be split into eigenspaces for the action of 
$i\mathfrak{h}$ (whose elements
are all simultaneously diagonalizable), which are labelled by elements of $(i\mathfrak{h})^*$ called weights. All the weights
appearing for a given representation may be written in coordinates relative to the basis $\{\mu_j\}_{1\leq j\leq m}$ 
of $(i\mathfrak{h})^*$ defined by
$\mu_j(i\epsilon_k) = \delta_{jk}$, and the largest weight for the lexicographic order is called the \emph{dominant weight} of the 
representation. The main classification result of the theory then states that irrreducible complex representations $\mV$ are in one-to-one correspondence with 
their dominant weights. The set of admissible dominant weights (\emph{i.e.} those corresponding to some irreducible representation of $\son$) is the set of $m$-uplets $(\rho^1, \cdots,\rho^m)$ in $\left(\tfrac12\ZM\right)^m$ such that 
\begin{equation}
\label{eq:dominant}
\begin{cases}
\rho^1 \geq \cdots \geq \rho^m \geq 0 \ & \ \textrm{if } n=2m+1, \\ 
\rho^1 \geq \cdots \geq |\rho^m| 
\ & \ \textrm{if } n=2m. 
\end{cases}
\end{equation}
Whenever there exists a real representation space $\mV_{\RM}$ of $\son$ such that $\mV = \mV_{\RM}\otimes\mathbb{C}$, the representation $(\mV,\rho)$ is called \emph{real}. In this case, we shall always use below the real vector space $\mV_{\RM}$ as $\mV$, as staying in the realm of real numbers does not introduce any new
difficulty (we shall thus use the same notation $\mV$ in the real and complex cases).

Letting from now on $(\mV,\rho)$ be an \emph{irreducible} representation of $\son$, the tensor product $\RM^n\otimes \mV$ (tensored over the real numbers if $\mV$ is real, and over the complex numbers if not) splits under the action of $\son$ into $N$ irreducible components:
$$\RM^n\otimes \mV = \otimes_{j=1}^N \mW_j ,$$
where the representation of $\son$ on $\mW_j$ will be denoted by $\lambda_j$.
To know which irreducible representations appear in the tensor product, one may use the following rule: a weight $\lambda$ 
appears as the dominant weight of a summand $\mW$ in $\RM^n\otimes\mV$ iff both the following conditions hold together~\cite{fulton-harris}:
\begin{itemize}
\item[(1)] $\lambda = \rho \pm \mu_i$ for some $i$, or $\lambda = \rho$ when $n=2m+1$ and $\rho^m>0$,
\item[(2)] $\lambda$ is dominant, \emph{i.e.} it satisfies the conditions
given by (\ref{eq:dominant}).
\end{itemize}
At the bundle level, one obtains the corresponding splitting 
$$T^*M\otimes E =\otimes_{j=1}^N F_j.$$
Letting $\Pi_j : \RM^n\otimes \mV \rightarrow \mW_j$ be the projection onto the $j$-th summand (or the analogous map at the
bundle level), we shall follow \cite{homma-casimir} and denote by $p_{j}$ the \emph{generalized Clifford action} 
$p_{j} (X)\sigma = \Pi_j(X\otimes\sigma)$.
Each projection induces a natural first-order operator 
$P_j = \Pi_j \circ \nabla$ (where $\nabla$ is the Levi-Civita connection) sending sections of $E$ into sections
of $F_j$, known as a \emph{Stein-Weiss operator}. The principal symbol of $P_j$ is $p_j(\xi)$ and that of
$(P_j)^*P_j$ is $p_j(\xi)^*p_j(\xi)$.

We shall also need the so-called \emph{conformal weight operator}, which plays an important role in conformal geometry. 
It is usually described as the operator $B : \RM^n\otimes\mV \rightarrow \RM^n\otimes\mV $ defined by
$$ B(\alpha\otimes v) = \sum_{i=1}^n e_i \otimes \rho (e_i\wedge\alpha)v ,$$
and, as such, is equivariant under the action of $\son$. The operator $B$ is self-adjoint, its eigenvalues are then real, and, by Schur's Lemma, $B$ acts homothetically 
on each summand
appearing in the decomposition of $\RM^n\otimes\mV$ into irreducibles. 
For any such summand $(\mW,\lambda)$, the corresponding eigenvalue is
$$ w(\lambda,\rho) \ = \ \frac12\,\left( c(\lambda) - c(\rho) - c(e)\right),$$
where $c(\cdot)$ denotes the Casimir number of a representation of $\son$ \cite{dmjcpgmh,pg-weyl},
defined as 
$c(\rho) = \langle \rho +\delta,\rho +\delta\rangle - \langle \delta,\delta\rangle$ where $\delta=(\delta_1,\dots,\delta_m)$ is the half-sum of
the roots, defined by $\delta_k = \tfrac{n-2k}{2}$ for $1\leq k \leq m$. Hence,
$c(e)=n-1$ for the standard representation $e$ of $\son$ on $\RM^n$. 
The main interest of conformal weights is that they are
easy to compute from the knowledge of the dominant weight of the representation \cite{dmjcpgmh}. Indeed,
\begin{equation}\label{eq:conformal-weights}
\begin{cases} 
w(\lambda,\rho) = \rho_i-i+1 & \textrm{if } \lambda = \rho + \mu_i \textrm{ for some } i,\\ 
w(\lambda,\rho) = -\rho_i+i-(n-1) & \textrm{if } \lambda = \rho - \mu_i \textrm{ for some } i,\\
w(\lambda,\rho) = -\tfrac{n-1}{2} & \textrm{if } \lambda=\rho.
\end{cases}
\end{equation}
The definition of the conformal weight operator extends to the bundle level, giving rise to a natural map
$B : T^*M\otimes E \rightarrow  T^*M\otimes E$ which reduces to multiplication by the relevant conformal weight on each 
subbundle $F_j$. This version of the conformal weight operator appears naturally when one looks at the behaviour of the 
Stein-Weiss operators $P_j$ with respect to conformal changes of metrics. Its eigenvalues can then be interpreted as the 
values of the powers of the conformal factor needed to make each of these operators conformally covariant, hence
their names \cite{fegan}.

Bochner-Weitzenb\"ock formulas (or, in short, Weitzenb\"ock formulas) are the last ingredients that are necessary for our analysis. They are defined as choices of sets of coefficients 
$(a_1, \dots,a_N)\in\RM^N$ such that the subsequent linear combination of the operators $(P_j)^*P_j$
is a zeroth-order operator, that is a curvature term:
\begin{equation}\label{weitzenbock} \sum_{j=1}^N a_j\, (P_j)^*P_j - \mathcal{R}=0.
\end{equation}
Bochner-Weitzenb\"ock formulas are classified. It is known that there are $\lfloor\tfrac{N}{2}\rfloor$ linearly independent
such formulas \cite{branson-jfa}, and finding them is a purely 
algebraic problem (although somehow tricky), which reduces 
to finding coefficients $(a_j)_{1\leq j\leq N}$ such that the principal symbol of the operator $\sum_{j=1}^N a_j\, (P_j)^*P_j$ has 
vanishing second-order part. They can be explicitly obtained through VanderMonde systems \cite{homma-casimir} or
recursive formulas \cite{semmelmann-weingart-weitzenbock}. 

\begin{assum}\label{assumption} 
From now on, coefficients $(a_j)$ are chosen such that they give rise to a Weitzenb\"ock formula as in (\ref{weitzenbock}).
\end{assum}

Letting $P_+ = \sum_{a_j>0}\sqrt{a_j}\,P_j$ and $P_- = \sum_{a_j<0}\sqrt{-a_j}\,P_j$, the Bochner-Weitzenb\"ock formula \eqref{weitzenbock} translates as
$$ (P_+)^*P_+ - (P_-)^*P_- - \mathcal{R}=0.$$
We now apply it to a section $\sigma$ of $E$ over an asymptotically flat manifold $(M,g)$. Integrating against $\sigma$ and performing an integration by parts, assuming momentarily that all terms
converge, we get an equality similar to (\ref{lichne_int}):
\begin{equation}\label{weitzenbock_int} 
\int_M |P_-\sigma|^2 + \langle\sigma,\mathcal{R}\sigma\rangle - |P_+\sigma|^2 
= \lim_{r\to\infty} \int_{S_r} b(\sigma) 
\end{equation}
where $b$ stands again for 
some boundary contribution to be described more explicitly later on.
This can be used towards a positive mass theorem if 
\begin{enumerate}[label=(\roman*)]
\item\label{item3} a section $\sigma$ of $E$ can be found such that $P_+\sigma=0$ and all integrals and limits make sense;
\item\label{item1} the curvature term $\mathcal{R}$ is nonnegative in the context at hand;
\item\label{item2} the boundary term can be related to the mass.
\end{enumerate}
Interestingly enough, this may also lead to less usual \emph{negative} mass theorems if $\mathcal{R}$ is nonpositive and $P_+$ is replaced by $P_-$ in \ref{item3}.

From  now on, our goal will be to give an answer to the question raised by condition \ref{item2}. As in Witten's proof, we shall consider here the case where
the section $\sigma$ is asymptotic to a constant section $\sigma_0$ of $E$.
We shall then show that the boundary contribution 
only depends on $\sigma_0$ if some natural asymptotic behaviour is assumed on $\sigma$. This fact opens the way to a refined study of the boundary contribution, which is a purely algebraic problem. 

We shall not discuss conditions \ref{item3} and \ref{item1}: condition \ref{item3} is usually obtained through the resolution of PDE problems which have been
extensively considered elsewhere, see \cite{ba,minerbe-alf-mass,parker-gauge,witten} for examples (note however that this may be avoided in some cases, as \cite{jammes} shows), and the validity of condition \ref{item1} of course 
depends on the context 
at hand.

To be more precise in our analysis of the question raised by condition \ref{item2},
we shall still need a bit more of notation. On our asymptotically flat manifold $(M,g)$,
we define a self-adjoint map $H$ of $T(M\setminus K)$ by 
$$ \langle HX,HY\rangle_g = \langle X,Y\rangle_0 \quad \forall\, X, Y \in T(M\setminus K) ,$$ 
where the latter scalar product is the Euclidean metric on $M\setminus K$ identified to $\RM^n\setminus B$. This enables us
to transfer sections of bundles over $\RM^n\setminus B$ to bundles over $M\setminus K$ (or \emph{vice-versa}) in a metric
preserving way~: this is of course obvious in the tensor case, and the spinor and mixed cases may be treated as in \cite{jpb-pg}.
Thus it makes sense to speak of \emph{constant sections} of the bundle $E$ w.r.t. to the flat connection over a neighbourhood of infinity. 
Equivalently, one may transfer the flat connection $\nabla^0$ itself to a flat metric connection (but with torsion) on the (spinor) 
frame bundle $\mathcal{P}$ or, conversely, the connection $\nabla$ induced by $g$ as a connection (again non torsion-free) on the 
trivial bundle $(\RM^n\setminus B)\times \mV$ which is compatible with the euclidean metric. As a result, if 
$\{e_i\}_{1\leq i\leq n}$ is a
(direct) orthonomal basis of $\RM^n$, if $\omega$ is the connection $1$-form of $\nabla$ in the frame $\{\varepsilon_i=He_i\}_{1\leq i\leq n}$,
and if $\sigma_0$ is a constant section of $E$ (w.r.t. to the flat connection), 
then $\nabla^0\sigma_0 =0$ and $\nabla\sigma_0 = \rho(\omega)\sigma_0$.

We now proceed to our study of the boundary contribution in \eqref{weitzenbock_int}. We now let once and for all $A = \sum_j a_j\Pi_j$, with $(a_1, \dots,a_N)$, chosen such that it gives rise to a Weitzenb\"ock formula, and we shall use the notations $P_+$ and $P_-$ introduced earlier. We also let $\sigma_0$ be a constant section of $E$ over the Euclidean space (transferred to the end of $M$ as above) and another section $\sigma = \sigma_0+\sigma_1$ where $\sigma_1$ tends to $0$ at infinity. 

Before stating our first technical result, we shall spend a few lines to justify the assumptions that will be made below on $\sigma_1$. A reasonable setting where condition \ref{item3} above can be obtained is when $P_+$ is an injectively elliptic operator (\emph{i.e} its principal symbol is injective for any nonzero covector, or equivalently $(P_+)^*P_+$ is elliptic in the usual sense). Examples include the Dirac operator or the Hodge-de Rham operators, see \cite{ba,minerbe-alf-mass,parker-taubes}. If $ \sigma$ is given such that $P_+\sigma = 0$, then
$$(P_+)^*P_+ \sigma_1 = - (P_+)^*P_+ \sigma_0  = - \left( (P_+^0)^*P^0_+ - (P_+)^*P_+\right)\sigma_0 ,$$
where $\cdot^{0}$ stands for the Euclidean metric. The analysis of the asymptotic behaviour of $\sigma_0$ can then be done in weighted H\"older spaces on
asymptotically flat manifolds, see \cite{ba,lockhart} for instance, and under the assumptions that $(P_+)^*P_+$ is elliptic, $\mathcal{R}$ is nonnegative and $\tau>\tfrac{n-2}{2}$, one may get that $\sigma_1 = O(r^{-a})$ and $\nabla\sigma_1 = O(r^{-a-1})$ for some $a>\tfrac{n-2}{2}$. 

We now come back to the case where $A = \sum_j a_j\Pi_j$ is a general projection 
with $(a_1, \dots,a_N)$, chosen such that it gives rise to a Weitzenb\"ock formula, and we state:

\begin{lemm}\label{lem-bord1}
Let $\sigma_0$ be an element of $\mathbb{V}$, seen as a constant section of $E$ over $\RM^n$. 
If $\sigma$ is a section of $E$ such that $|\sigma - \sigma_0| = O(r^{-a})$ 
and $\nabla\sigma = O(r^{-a-1})$ for some $a>\tfrac{n-2}{2}$, then the integrals in (\ref{weitzenbock_int}) converge and the limit of the boundary term only depends on the constant part of $\sigma$. More precisely,
$$ \lim_{r\to\infty}\int_{S_r} b(\sigma) \ = \ - \ \lim_{r\to\infty}\int_{S_r}
\langle \nu\otimes\sigma_0 , A(\rho(\omega)\sigma_0)\rangle \ d\!\vol_{S_r}  .$$
\end{lemm}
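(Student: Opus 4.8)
The plan is to compute the boundary term $b(\sigma)$ explicitly from the integration by parts that produces \eqref{weitzenbock_int}, and then extract its asymptotic limit. First I would recall that integrating the Weitzenb\"ock formula \eqref{weitzenbock} over the domain $\Omega_r$ bounded by $S_r$ and applying the divergence theorem to each term $\langle \sigma, (P_j)^*P_j\sigma\rangle$ produces a boundary integrand of the form $\langle \nu \otimes \sigma, a_j\, p_j(\cdot)P_j\sigma\rangle$ on $S_r$, so that the total boundary contribution is
\begin{equation*}
\int_{S_r} b(\sigma) = \int_{S_r} \langle \nu\otimes\sigma,\, A(\nabla\sigma)\rangle\, d\!\vol_{S_r},
\end{equation*}
where $A=\sum_j a_j\Pi_j$ collects the projections with their coefficients. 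The point is that each Stein--Weiss operator $P_j=\Pi_j\circ\nabla$ contributes its principal-symbol pairing, and assembling them reconstitutes $A$ applied to the full covariant derivative $\nabla\sigma$ contracted against $\nu\otimes\sigma$.

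Next I would handle the convergence claims, which are where the decay hypotheses $|\sigma-\sigma_0|=O(r^{-a})$, $\nabla\sigma=O(r^{-a-1})$ with $a>\tfrac{n-2}{2}$ enter. The volume term scales like $\int_{S_r}\sim r^{n-1}$, while the integrand of the boundary term decays like $r^{-(2a+1)}$ when one keeps the genuinely quadratic-in-$\nabla\sigma$ pieces, and like $r^{-(a+1)}$ for the cross term $\langle\nu\otimes\sigma_0,A(\nabla\sigma)\rangle$; the condition $2a>n-2$ ensures the quadratic boundary pieces and the bulk integrals $\int_M|\nabla\sigma|^2$ converge, matching the analysis behind the classical Lichnerowicz case. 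The same bookkeeping shows that, in the limit $r\to\infty$, every contribution in $\langle\nu\otimes\sigma,A(\nabla\sigma)\rangle$ in which $\sigma$ or $\nabla\sigma$ is replaced by its deviation from the model $(\sigma_0,\nabla\sigma_0)$ vanishes, so only the leading term survives.

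The crucial algebraic substitution is then to replace $\nabla\sigma$ by its asymptotic model value. Since $\sigma\to\sigma_0$ and $\sigma_0$ is constant for the flat connection, the discussion preceding the lemma gives $\nabla\sigma_0=\rho(\omega)\sigma_0$, where $\omega$ is the connection $1$-form of $\nabla$ in the transferred frame $\{He_i\}$. Because $\nabla^0\sigma_0=0$, the quantity $\nabla\sigma$ is asymptotic to $\rho(\omega)\sigma_0$ to the order dictated by the decay of $\omega=O(r^{-\tau-1})$, so in the surviving leading-order term one may write $\sigma\approx\sigma_0$ and $\nabla\sigma\approx\rho(\omega)\sigma_0$. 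This yields
\begin{equation*}
\lim_{r\to\infty}\int_{S_r} b(\sigma) = \lim_{r\to\infty}\int_{S_r} \langle \nu\otimes\sigma_0,\, A(\rho(\omega)\sigma_0)\rangle\, d\!\vol_{S_r},
\end{equation*}
and the stated sign arises from the orientation of the outward normal relative to the divergence convention, which I would pin down by comparing with the spinorial prototype \eqref{lichne_int}.

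I expect the main obstacle to lie not in the algebra but in the careful tracking of decay orders to justify that only the $(\sigma_0,\rho(\omega)\sigma_0)$ term survives: one must verify simultaneously that the bulk integrals converge, that the genuinely quadratic boundary remainder tends to zero, and that the error terms in $\sigma-\sigma_0$ and in $\nabla\sigma-\rho(\omega)\sigma_0$ contribute nothing in the limit. The sign conventions in passing between $A(\nabla\sigma)$ and $-A(\rho(\omega)\sigma_0)$ also require attention, but these are bookkeeping matters once the asymptotic analysis is in place.
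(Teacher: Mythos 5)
Your proposal follows the paper's skeleton---integrate by parts to identify $\int_{S_r}b(\sigma)$ with $\mp\int_{S_r}\langle\nu\otimes\sigma,A(\nabla\sigma)\rangle\,d\!\vol_{S_r}$, split $\sigma=\sigma_0+\sigma_1$, and discard every term involving the deviation $\sigma_1$---but there is a genuine gap at exactly the step you wave through with ``the same bookkeeping shows''. Decay counting does kill the two terms containing $\nu\otimes\sigma_1$: their integrands are $O(r^{-a-\tau-1})$ and $O(r^{-a-\min(a,\tau)-1})$, both $o(r^{-(n-1)})$ since $a,\tau>\tfrac{n-2}{2}$. It does \emph{not} kill the cross term
\[
\int_{S_r}\langle\nu\otimes\sigma_0,\,A(\nabla\sigma_1)\rangle\,d\!\vol_{S_r},
\]
whose integrand is only $O(r^{-\min(a,\tau)-1})$ (you record the rate $r^{-(a+1)}$ yourself). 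Against the sphere area $\sim r^{n-1}$ this gives a bound of size $r^{\,n-2-\min(a,\tau)}$, which tends to zero only when $\min(a,\tau)>n-2$; the hypothesis allows, say, $n=4$ and $a=\tau=1.1$, leaving an uncontrolled quantity of size $r^{0.9}$. Equivalently, $\nabla\sigma$ and $\rho(\omega)\sigma_0$ are both objects of size $O(r^{-\tau-1})$, and their difference is of comparable size relative to the critical threshold $r^{-(n-1)}$, so the ``asymptotic substitution'' $\nabla\sigma\approx\rho(\omega)\sigma_0$ cannot be justified by tracking decay orders, however carefully.

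The paper closes this gap with a structural argument, not an estimate. Using self-adjointness of each projection $\Pi_j$ and the Leibniz rule, it transfers the derivative from $\sigma_1$ onto $\sigma_0$:
\[
\langle\bullet\otimes\sigma_0,\Pi_j(\nabla\sigma_1)\rangle
=\sum_{k}\nabla_{\varepsilon_k}\bigl(\langle\Pi_j(\bullet\otimes\sigma_0),\varepsilon_k\otimes\sigma_1\rangle\bigr)
-\sum_{k}\langle\Pi_j(\bullet\otimes\nabla_{\varepsilon_k}\sigma_0),\varepsilon_k\otimes\sigma_1\rangle .
\]
The second sum is $O(r^{-a-\tau-1})$ because $\nabla\sigma_0=\rho(\omega)\sigma_0=O(r^{-\tau-1})$, so it vanishes in the limit by the counting above; the first sum is a divergence, of the form $*\,d*\beta$ for a $2$-form $\beta$, so its integral over each \emph{closed} sphere $S_r$ is exactly zero by Stokes' theorem---no decay is needed at all. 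This integration by parts on the sphere is the heart of the lemma: it is what makes the sharp threshold $a>\tfrac{n-2}{2}$ (rather than $a>n-2$) sufficient, and without it your argument does not go through. (The sign discrepancy in your boundary term is, as you say, a convention matter: with $b$ defined through \eqref{weitzenbock_int}, the minus sign falls out of the first integration by parts.)
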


It is necessary to have $a>\tfrac{n-2}{2}$ for the arguments of the proof to hold true. It is remarkable that this decay rate is
the same as the decay rate usually inferred from a PDE analysis of the equations
$$ P_+\sigma = 0,\quad \sigma \to_{\infty} \sigma_0 $$
in weighted functional spaces, as mentioned above.

\smallskip

\begin{proof} From \eqref{weitzenbock_int}, one computes on a bounded domain $D$ that
$$ \int_D |P_+\sigma|^2 - |P_-\sigma|^2 \ = \ \sum_{j=1}^N a_j\, \int_D |\Pi_j(\nabla\sigma)|^2 \ = \ 
\sum_{j=1}^N a_j\, \int_D \langle\nabla\sigma,\Pi_j(\nabla\sigma)\rangle .$$
It is thus enough to manage the integration by parts for a single summand corresponding to an index $j$ in 
$\{1,...,N\}$: 
\begin{align*}
\int_D \langle\nabla\sigma,\Pi_j(\nabla\sigma)\rangle 
& =  \ \int_D \tr_g\left[\nabla\left( \langle\sigma,\Pi_j(\nabla\sigma)\rangle\right)\right] - 
\langle\sigma,\tr_g\left[\nabla\left(\Pi_j(\nabla\sigma)\right)\right]\rangle \\
& =  \ \int_{\partial D} \langle\sigma,\Pi_j(\nabla\sigma)\rangle(\nu) - \int_D 
\langle\sigma,\tr_g\left[\nabla\left(\Pi_j(\nabla\sigma)\right)\right]\rangle  \\
& =  \ \int_{\partial D} \langle\nu\otimes\sigma,\Pi_j(\nabla\sigma)\rangle + \int_D 
\langle\sigma,(P_j)^*P_j(\sigma)\rangle .
\end{align*}
Hence,
\begin{align*}
\int_D |P_+\sigma|^2 - |P_-\sigma|^2 \ = \int_{\partial D} \langle\nu\otimes\sigma,A(\nabla\sigma)\rangle + \int_D 
\langle\sigma,\mathcal{R}\sigma\rangle .
\end{align*}
Applying this to any domain enclosed by a coordinated sphere $S_r$, one gets:
$$ \int_{S_r} b(\sigma) \ = \ - \int_{S_r} \langle \nu\otimes\sigma , A(\nabla\sigma)\rangle \ d\!\vol_{S_r}  .$$
We now show that the limit as $r\to\infty$ only depends on $\sigma_0$. We again compute first with a single projection.
One writes $\sigma=\sigma_0+\sigma_1$, where $|\sigma_1| = O(r^{-a})$ and $\nabla\sigma_1 = O(r^{-a-1})$, and
one computes on a sphere $S_r$~:
\begin{align*} \int_{S_r} \langle \nu\otimes\sigma , \Pi(\nabla\sigma)\rangle \ = & \ 
\int_{S_r} \langle \nu\otimes\sigma_0 , \Pi(\nabla\sigma_0)\rangle +
\int_{S_r} \langle \nu\otimes\sigma_0 , \Pi(\nabla\sigma_1)\rangle \\ 
& \ \ \ +
\int_{S_r} \langle \nu\otimes\sigma_1 , \Pi(\nabla\sigma_0)\rangle +
\int_{S_r} \langle \nu\otimes\sigma_1 , \Pi(\nabla\sigma_1)\rangle .
\end{align*}
When $r\to\infty$, the assumptions on $\sigma_1$ imply that the last two terms vanish when
$\tau>\tfrac{n-2}{2}$ as their integrands are $o(r^{-(n-1)})$ whereas the volumes of $S_r$ grow as $r^{n-1}$. 
As regards the second term, one notices that (denoting with a bold dot an absent variable)
\begin{align*} \langle \bullet\otimes\sigma_0 , \Pi(\nabla\sigma_1)\rangle \ 
& 
= \ \sum_{j=1}^N \langle \Pi(\bullet\otimes\sigma_0) , \varepsilon_j\otimes\nabla_{\varepsilon_j}\sigma_1\rangle \\
& = \ \sum_{j=1}^N \, \nabla_{\varepsilon_j}\left( \langle \Pi(\bullet\otimes\sigma_0) ,\varepsilon_j\otimes\sigma_1\rangle\right)\\
& \ \ \ \ \ \ \ \ \ \ \ \ \ \ \ \ 
- \sum_{j=1}^N \,  \langle \Pi(\bullet\otimes\nabla_{\varepsilon_j}\sigma_0) , \varepsilon_j\otimes\sigma_1\rangle ,
\end{align*}
where $\{\varepsilon_i \}_{1\leq i\leq N} = \{He_i\}_{1\leq i\leq N}$ is the $g$-orthonormal frame of $TM$ deduced from a (constant) Euclidean
basis of $\RM^n$. After integrating on $S_r$ and letting $r$ tend to
$\infty$, the same `decay \emph{vs.}\! volume growth' considerations as above show that the very last term does not contribute and
it remains to consider the first one. Coming now back to the case at hand, where $A$ is in the picture, the precise term we need to study is
$$ \sum_{j=1}^N \, \nabla_{\varepsilon_j}\left( \langle A(\bullet\otimes\sigma_0) ,\varepsilon_j\otimes\sigma_1\rangle\right), $$
but this is the divergence of the bilinear form $\theta$ defined by
$$ \theta (X,Y) = \langle A(X\otimes\sigma_0) ,Y\otimes\sigma_1\rangle .$$
We now use the fact that the $N$-uple 
$(a_1,\dots,a_N)$ gives rise to a Weitzenb\"ock formula, hence the second-order part of the principal symbol of $\sum a_j(P_j)^*P_j$ vanishes. Let us denote this second-order  part by $q_{\xi}$ on any covector $\xi$ (forgetting for a moment that it vanishes). It is easily computed that
$$ \langle q_{\xi}(\sigma_0),\sigma_1\rangle \ = \ \langle A(\xi \otimes \sigma_0), \xi\otimes\sigma_1\rangle \ = \ \theta(\xi,\xi), $$
and this expression is equal to $0$ since $q_{\xi}$ vanishes.
Thus $\theta$ is a $2$-form, being equal to its antisymmetric part.
Coming back to our previous computation, the term we have to integrate is of the form $*d*\theta$ where $*$ is
the ($n$-dimensional) Hodge operator. Thus, on any closed hypersurface $S$ with outer unit normal $\nu$,
$$ \int_{S} (*\, d*\theta)(\nu) \, d\!\vol_{S} \ = \ \int_{S}  d*\theta \ =  \ 0.$$ 
This shows that the only nonzero contribution in the limiting boundary term comes from the term containing only $\sigma_0$ above, as expected.
\end{proof}

\section{Main statement}\label{sec:main}

We now proceed to the main part of our paper. Recall that we consider a natural bundle $E$ over $(M,g)$ issued from an irreducible representation $(\mV,\rho)$ of the special orthogonal group $\Spon$ or the spinor group $\Spin(n)$. The tensor product $\RM^n\otimes\mV$ splits into irreducibles as $\mW_1\oplus\cdots\oplus\mW_N$, with projections $\Pi_j$, and we have chosen a set $\mathbf{a}=(a_j)_{1\leq j\leq N}$ such that $\sum a_j(P_j)^*P_j$ is a curvature operator $\mathcal{R}$, where $P_j = \Pi_j\circ\nabla$, see \eqref{weitzenbock}.

Lemma \ref{lem-bord1} can then be understood as follows: the boundary-at-infinity contribution of the integrated version of the identity $\sum a_j(P_j)^*P_j=\mathcal{R}$ can be seen as a map
$$ \sigma \mapsto \lim_{r\to\infty}\int_{S_r} b(\sigma) ,$$
acting on sections $\sigma$ that are constant at infinity (with asymptotic behaviour as in Lemma \ref{lem-bord1}), and this map is a quadratic form over the boundary values, \emph{i.e} the constant sections:
$$ \sigma_0 \in \mV \mapsto \mathcal{Q}(\sigma_0) = - \lim_{r\to\infty}\int_{S_r}
\langle \nu\otimes\sigma_0 , A(\rho(\omega)\sigma_0)\rangle \ d\!\vol_{S_r}  $$
where $\omega$ is the connection $1$-form of the Levi-Civita connection of $g$. (Note that one rather has a Hermitian form if $\mV$ is a complex space, but we shall continue to call it `quadratic' as this will be a harmless modification). We shall now prove that the \emph{trace} of this quadratic form is always related to the mass. Our
 main result is:
 
\begin{theo}\label{theo:main}
Let $(\mV,\rho,E)$ be as above, and $\mathbf{a}=(a_j)_{1\leq j\leq N}$ be a choice of coefficients giving rise to a 
Weitzenb\"ock formula as in \eqref{weitzenbock}. Then there is a constant $\mu(\mathbf{a})$ such that, given any orthonormal basis $(\sigma_0^{\kappa})_{1\leq \kappa\leq \dim \mV}$ of $\mV$,
$$ - \, \lim_{r\to\infty}\int_{S_r} \, \sum_{\kappa=1}^{\dim\mV} \, 
 \langle \,\nu\otimes\sigma_0^{\kappa} \, , \, A(\rho(\omega)\sigma_0^{\kappa})\,\rangle_0 \ d\!\vol_{S_r} 
\ = \ \mu(\mathbf{a}) \, m(g).$$
Moreover,  $\mu(\mathbf{a})$ only depends on $\mathbf{a}$ and 
representation-theoretic data of $\mV$:
$$ \mu(\mathbf{a}) \ = \ -\,\sum_{j=1}^{N}\, a_j\, \frac{(\dim\mW_j)\, w(\lambda_j,\rho)}{2n(n-1)}\, ,$$
where $w(\lambda_j,\rho)$ denotes the \emph{conformal weight} of the summand $\mW_j$ in $\RM^n\otimes \mV$.
\end{theo}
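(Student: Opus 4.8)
The plan is to recognise the left-hand side as an integral that, after summation over the orthonormal basis, depends on $\mV$ only through representation-theoretic invariants. Since the sum over $\kappa$ is finite, the left-hand side equals $\lim_{r\to\infty}\int_{S_r}\Theta\,d\!\vol_{S_r}$, where $\Theta=-\sum_\kappa\langle\nu\otimes\sigma_\kappa,A(\rho(\omega)\sigma_\kappa)\rangle_0$. The key structural remark is that summing over an \emph{orthonormal} basis of $\mV$ produces a trace in the $\mV$-factor. I would encode this by introducing the trilinear map
\[
T(X,Y,\xi)\ =\ \sum_{\kappa=1}^{\dim\mV}\langle X\otimes\sigma_\kappa,\,A(Y\otimes\rho(\xi)\sigma_\kappa)\rangle ,\qquad X,Y\in\RM^n,\ \xi\in\son ,
\]
and verifying that it is $\Spon(n)$-invariant, meaning $T(gX,gY,\Ad_g\xi)=T(X,Y,\xi)$. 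This is proved by the substitution $\sigma_\kappa\mapsto\rho(g)\sigma_\kappa$ (a legitimate change of orthonormal basis because $\rho(g)$ is orthogonal), together with $\rho(\Ad_g\xi)=\rho(g)\rho(\xi)\rho(g)^{-1}$ and the $\Spon(n)$-equivariance of $A$.

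Next I would classify the invariant forms. Identifying $\son\cong\Lambda^2\RM^n$ via the convention of Section~\ref{sec:setting}, $T$ is an invariant element of $(\RM^n)^*\otimes(\RM^n)^*\otimes(\Lambda^2\RM^n)^*$, i.e. an invariant built from four copies of $\RM^n$. By the first fundamental theorem for $\mathrm{O}(n)$ such invariants are spanned by the three Kronecker pairings; requiring antisymmetry in the two $\Lambda^2$-entries leaves a one-dimensional space, spanned by $(X,Y,Z\wedge W)\mapsto\langle X\wedge Y,Z\wedge W\rangle$. Hence $T(X,Y,\xi)=c\,\langle X\wedge Y,\xi\rangle$ for a single constant $c=c(\mathbf a,\rho)$. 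The only extra, genuinely $\Spon(n)$-specific invariant is the volume form, which arises solely when $n=4$; I will check at the end that it drops out.

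To pin down $c$ I would contract $T$ with itself along $\sum_{b,c}T(e_b,e_c,e_b\wedge e_c)$. Using $T(X,Y,\xi)=c\langle X\wedge Y,\xi\rangle$ this is $c\sum_{b,c}\|e_b\wedge e_c\|^2=c\,n(n-1)$. Computing it instead from the definition, and recognising that $\sum_c e_c\otimes\rho(e_b\wedge e_c)v=-B(e_b\otimes v)$ for the conformal weight operator $B$ of Section~\ref{sec:setting}, the same contraction equals $-\operatorname{Tr}_{\RM^n\otimes\mV}(AB)$. Because $A=\sum_j a_j\Pi_j$ and $B=\sum_j w(\lambda_j,\rho)\Pi_j$ act by scalars on the mutually orthogonal summands $\mW_j$, this trace is $\sum_j a_j\,w(\lambda_j,\rho)\,\dim\mW_j$. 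Therefore
\[
c\ =\ -\frac{1}{n(n-1)}\sum_{j=1}^N a_j\,w(\lambda_j,\rho)\,\dim\mW_j\ =\ 2\,\mu(\mathbf a),
\]
so that $c$ already exhibits the claimed dependence on $\mathbf a$ and on the data $\dim\mW_j$, $w(\lambda_j,\rho)$; the factor $\tfrac12$ relating $c$ to $\mu(\mathbf a)$ will surface in the last step.

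Finally I would insert this into $\Theta$. Expanding $\rho(\omega)\sigma_\kappa=\sum_{a'}e_{a'}\otimes\rho(\omega(\varepsilon_{a'}))\sigma_\kappa$ in the frame $\varepsilon_i=He_i$, with $\omega(\varepsilon_{a'})=\tfrac12\sum_{b,c}\Gamma_{a'bc}\,e_b\wedge e_c$ and $\Gamma_{a'bc}=\langle\nabla_{\varepsilon_{a'}}\varepsilon_b,\varepsilon_c\rangle$, the wedge pairing collapses the sums to $\Theta=-c\sum_a\nu^a\sum_{a'}\Gamma_{a'aa'}$. The remaining and genuinely computational step is the first-order expansion of these connection coefficients in $h=g-g_0$: the leading term simplifies (the $\partial_{a'}h_{bc}$ contributions cancel) to $\Gamma_{a'bc}=\tfrac12(\partial_b h_{a'c}-\partial_c h_{a'b})+\dots$, hence $\sum_{a'}\Gamma_{a'aa'}=-\tfrac12(\diver_0 g-d\,\tr_0 g)_a+\dots$, the dots being of order $r^{-2\tau-1}$ and thus invisible in the limit. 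This gives $\Theta=\tfrac c2(\diver_0 g-d\,\tr_0 g)(\nu)+\dots$ and $\lim_{r\to\infty}\int_{S_r}\Theta=\tfrac c2\,m(g)=\mu(\mathbf a)\,m(g)$. The symmetry of $\partial h$ responsible for the cancellation also annihilates the $n=4$ volume-form invariant, since it would have to be contracted against the symmetric tensor $\partial h$. I expect this last step --- matching the connection form with the mass integrand while tracking all signs and normalisations --- to be the main obstacle, the invariant-theoretic reduction being the conceptual engine of the argument.
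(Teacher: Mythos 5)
Your proof is correct, and although it runs on the same conceptual engine as the paper's --- an invariance argument reducing the summed boundary integrand to an (essentially) one-dimensional space of invariants, followed by a trace computation built on the conformal weight operator, giving $\Tr(AB)=\sum_j a_j\,w(\lambda_j,\rho)\dim\mW_j$ --- it differs genuinely in the two steps that pin down the constants, and in both places your route is the more self-contained one. First, where you classify the trilinear invariant $T$ by the first fundamental theorem, the paper applies Schur's Lemma to the equivariant map $\beta_A\colon\RM^n\otimes\son\to\RM^n$ and asserts (omitting the computation) that the standard representation occurs exactly once in $\RM^n\otimes\son$; that assertion fails precisely when $n=4$, where $\Lambda^3\RM^4\cong\RM^4$ under the Hodge star produces a second copy --- the same exceptional invariant $\epsilon(X,Y,Z,W)$ you isolate --- so your closing observation that this term dies against the symmetric tensor $\partial h$ is not a pedantic extra but actually repairs a gap in the published argument. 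Second, the paper never computes the proportionality between the projection $\pi(\omega)$ and $(\diver_0 g-d\tr_0 g)$ directly: it quotes the Andersson--Dahl expansion of $\omega_j^k$ only to see that the boundary term is \emph{some} multiple of the mass integrand, and then calibrates that multiple against Witten's known spinor case (Lichnerowicz coefficients $a_1=-1$, $a_2=n-1$, known boundary term $\tfrac14\dim\Sigma_{(+)}(\diver_0 g-d\tr_0 g)$), deducing $\pi(\omega)=-\tfrac{1}{2(n-1)}(\diver_0 g-d\tr_0 g)$. You instead expand the connection coefficients yourself, $\Gamma_{a'bc}=\tfrac12(\partial_b h_{a'c}-\partial_c h_{a'b})+O(r^{-2\tau-1})$, and trace; this agrees with the quoted expansion, your sign bookkeeping checks out ($c=2\mu(\mathbf{a})$ and $\sum_{a'}\Gamma_{a'aa'}=-\tfrac12(\diver_0 g-d\tr_0 g)_a$, hence $\Theta=\tfrac{c}{2}(\diver_0 g-d\tr_0 g)(\nu)$ up to terms that vanish in the limit), and the resulting proof needs no input from the spinor literature. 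What the paper's calibration buys is insulation from exactly the normalization conventions (the factor $H\approx\Id-\tfrac12 h$, the $\tfrac12$ in $\omega(\varepsilon_{a'})=\tfrac12\sum_{b,c}\Gamma_{a'bc}\,e_b\wedge e_c$) that you must, and do, track by hand; what your direct computation buys is independence from Witten's result and a proof that is uniformly valid in all dimensions $n\geq3$, including $n=4$.
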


The content of this theorem is as follows: although it is not always true that the mass appears in the boundary-at-infinity contribution in \eqref{weitzenbock_int} for a single section, it does always appear when we sum the formula over a basis of constant sections. Coming back to \eqref{weitzenbock_int}, one has:
$$ \mu(\mathbf{a}) \, m(g) \ = \sum_{\kappa=1}^{\dim\mV} \mathcal{Q} (\sigma_0^{\kappa}) \ = \ \sum_{\kappa=1}^{\dim\mV} \ \int_M \, |P_-\sigma^{\kappa}|^2 - |P_+\sigma^{\kappa}|^2 + \langle\sigma^{\kappa},\mathcal{R}\sigma^{\kappa}\rangle ,$$
whenever $\sigma^{\kappa} \rightarrow_{\infty} \sigma_0^{\kappa}$ for each $\kappa=1,\dots,\dim\mV$ with the right asymptotic behaviour. Assuming moreover that $P_{\pm}\sigma^{\kappa} = 0$ for at least one choice of sign and for each $\kappa=1,\dots,\dim\mV$, one may obtain \emph{positive} or \emph{negative} mass theorems, depending on the combination of signs of both $\mu(\mathbf{a})$ and the curvature operator $\mathcal{R}$. 

The proof of Theorem \ref{theo:main} is divided into two parts: existence of $\mu(\mathbf{a})$, then its computation. The first step is very simple if the dimension $n$ is not equal to $4$. Indeed, one notices 
that the map $\beta_A$ from $\RM^n\otimes\son$ into $\RM^n$ defined by
$$ \omega \mapsto  \beta_A(\omega) \, = \,  
- \, \sum_{\kappa=1}^{\dim\mV}\, \langle \,\cdot\otimes\sigma_0^{\kappa}, A(\rho(\omega)\sigma_0^{\kappa})\,\rangle_0 $$
is equivariant under the action of $\son$. (If $\mV$ is complex, one should rather take the real part of this map, since the boundary contribution resulting from the integration by parts should always be real.) 

If $n\neq 4$, there is always a unique factor $\RM^n$ in the decomposition of $\RM^n\otimes\son$ into irreducible summands (this is a simple computation that can be done using the rules recalled in section \ref{sec:setting} and that we shall leave to the reader). Thus Schur's Lemma implies that
$\beta_A$ is always a multiple of the orthogonal projection onto this unique summand, identified to $\RM^n$. Letting $\pi$ denote this projection, this means that
$$ \beta_A(\omega) \, = \,  \textrm{\emph{const}}(\mV,\rho,\mathbf{a})\,\pi $$
where $\textrm{\emph{const}}(\mV,\rho,\mathbf{a})$ denotes a constant depending only on the choice of the representation $(\mV,\rho)$ and the coefficients $\mathbf{a}=(a_j)$.
Moreover, the computation of this map is well understood in the special case where $(\mV,\rho)$ is the spin representation, $\mathbf{a}=(a_j)$ is the set of coefficients leading to the classical Lichnerowicz-Schr\"odinger formula, and $\omega$ comes from the Levi-Civita connection of the Riemannian metric $g$. In this case, $\beta_A(\omega)$ has the same dominant term at infinity as a (precisely known) multiple of $\diver_0 g - d(\tr_0 g)$.
Thus, once integrated, the (traced) boundary contribution has the same limit as
$$  \int_{S_r}\,\left( \diver_0 g - d(\tr_0 g)\right)(\nu_r)\ d\!\vol_{S_r}  ,$$ 
which itself converges to a positive multiple of the mass as $r$ tends to infinity.
This then shows that the limit of the (traced) boundary contribution is a multiple of the mass for any choice of representation $(\mV,\rho)$ and coefficients $\mathbf{a}=(a_j)$. This in turn proves the existence of the constants $\mu(\mathbf{a})$ if $n\neq 4$. 

The case $n=4$ is slightly more involved as two distinct copies of $\RM^4$ appear in the decomposition of $\RM^4\otimes\mathfrak{so}(4)$ into irreducibles, due to the self-duality phenomenon. Thus, the previous proof does not apply. We shall provide below an alternative argument, which relies on the precise study of the map $\beta_A$. As this study is also necessary for the computations of the values of the constants $\mu(\mathbf{a})$, this will be done in the next section.

\section{Computation for a single projection}\label{sec:computation}

We now proceed to the precise computation of $\mu(\mathbf{a})$. As the sequel will show, the missing arguments in the $n=4$ case will also come as a by-product of some of our computations below. 

We fix here a given factor $(\mW,\lambda)$ appearing in the decomposition of $\RM^n\otimes\mV$ into irreducibles. As $\mu(\mathbf{a})$ is obviously linear in $\mathbf{a}$, it is enough to compute the constant when $A$ is a  projection on a single irreducible summand $\mW$ in $\RM^n\otimes\mV$. We thus let $A = \Pi$, the projection onto $\mW$, seen as a subspace of $\RM^n\otimes\mV$. Thus, the map to be considered is $\beta = \beta_{\Pi}$ defined by
$$ \beta(\omega) \, = \,  
- \, \sum_{\kappa=1}^{\dim\mV}\, \langle \,\cdot\otimes\sigma_0^{\kappa}, \Pi(\rho(\omega)\sigma_0^{\kappa})\,\rangle_0 $$
where $(\sigma_0^{\kappa})_{1\leq \kappa\leq \dim \mV}$ is any orthonormal basis of $\mV$.
(To be precise enough, we should recall that we must take the real part of this map if $\mV$ is complex: as the sequel will show, we can forget about the real part as the result will turn out to be real.) 

\subsection*{Preliminaries and the proof when $n\neq 4$}
To compute $\beta$, one may restrict it to the subfactor $\RM^n$ in $\RM^n\otimes\son$ induced by the equivariant injection of the former into the latter that is explicitly described as follows: for any $1$-form $\alpha$, one lets $i(\alpha)$ be the $2$-form with values
into $\RM^n$, \emph{i.e.} an element of $\RM^n\otimes\son$, defined by
$$ i(\alpha)\, (X,Y) \ = \ (\alpha\wedge I)\,(X,Y) \ = \ \alpha(X)Y - \alpha(Y)X . $$
Our goal is now to compute $\beta\circ i$.
As a first step, we study $\rho(\alpha\wedge I)$ for an arbitrary $1$-form $\alpha$ (recall that we freely identify elements
of $\RM^n$ and $(\RM^n)^*$, \emph{i.e.} forms and vectors). By definition, for any vector $Z$ in $\RM^n$,
\begin{align*} 
(\alpha\wedge I)(Z) & = \sum_{i,j} \left( \alpha(e_i)\langle e_j, Z\rangle - \alpha(e_j)\langle e_i, Z\rangle \right) e_i\otimes e_j \\
& = \sum_{i<j} \left( \alpha(e_i)\langle e_j, Z\rangle - \alpha(e_j)\langle e_i, Z\rangle \right) e_i\wedge e_j .
\end{align*}
or equivalently,
\begin{equation*} 
(\alpha\wedge I) =  \sum_{i<j}\left( \alpha(e_i) e_j - \alpha(e_j) e_i\right) \otimes  e_i\wedge e_j  .
\end{equation*}
This leads to:
\begin{align*} 
\rho(\alpha\wedge I) & =  \sum_{i<j} \left( \alpha(e_i) e_j - \alpha(e_j) e_i\right) \otimes \rho(e_i\wedge e_j) \\
& = \sum_{i<j} \alpha(e_i) e_j \otimes \rho(e_i\wedge e_j) - \alpha(e_j) e_i \otimes \rho(e_i\wedge e_j) \\
& = \sum_{i,j} \alpha(e_i) e_j \otimes \rho(e_i\wedge e_j) \\
& = \sum_{j} e_j \otimes \rho(\alpha\wedge e_j) .
\end{align*}
This is nothing but the opposite of the conformal weight operator! More precisely, 
$$\rho(\alpha\wedge I)\sigma = -B(\alpha\otimes\sigma)\ \ \textrm{for any } \sigma \in \mV. $$
These remarks reduce our problem to computing, for any $\alpha$ in $\RM^n$, 
$$ \beta \circ i (\alpha) \, = \, w(\lambda,\rho)\,
 \sum_{\kappa=1}^{\dim\mV}\,\langle \,\cdot\otimes\sigma_0^{\kappa}, \Pi(\alpha\otimes\sigma_0^{\kappa})\,\rangle_0 \, .$$
But the map 
$$ \alpha\otimes X \in \RM^n\otimes\RM^n \ \mapsto \ 
\sum_{\kappa=1}^{\dim\mV}\,\langle \, X\otimes\sigma_0^{\kappa}, \Pi(\alpha\otimes\sigma_0^{\kappa})\,\rangle_0 \,\in\RM  $$
is invariant under the action of $\son$ on $\RM^n\otimes\RM^n$, thus must be a constant multiple of the 
trivial contraction $\alpha\otimes X \mapsto \alpha(X)$. It then suffices to compute
$$ \sum_{i=1}^n\sum_{\kappa=1}^{\dim\mV}\,\langle \, e_i\otimes\sigma_0^{\kappa}, \Pi(e_i\otimes\sigma_0^{\kappa})\,\rangle_0 \, .$$
This is of course the trace of the matrix representing $\Pi :\RM^n\otimes\mV \rightarrow \RM^n\otimes\mV$ in the 
orthonormal basis $\{ e_i\otimes\sigma_0^{\kappa}\}_{i,\kappa}$. As $\Pi$ is an orthogonal projection operator, 
this trace equals the rank of $\Pi$, \emph{i.e.} the dimension of its image $\mW$. Thus, for any $\alpha$, 
$$ \beta \circ i (\alpha) \, = \, w(\lambda,\rho)\,\sum_{\kappa=1}^{\dim\mV}\,\langle \,\cdot\otimes\sigma_0^{\kappa}, 
\Pi(\alpha\otimes\sigma_0^{\kappa})\,\rangle_0 \ = \ \frac{(\dim\mW)\, w(\lambda,\rho)}{n}\,\alpha \, .$$

We now assume that $n\neq 4$ and we denote by $\pi$ the map from $\RM^n\otimes\son$ onto $\RM^n$, seen as the unique summand isomorphic to
$\RM^n$ appearing in the decomposition of $\RM^n\otimes\son$ into irreducibles, such that $\pi \circ i (\alpha) =\alpha$ for any $\alpha\in\RM^n$. Thus,
\begin{equation}\label{eq:beta_Pi_vs_pi} 
\beta \ = \ \frac{(\dim\mW)\, w(\lambda,\rho)}{n}\ \pi\, .
\end{equation}
In the setting of Theorem \ref{theo:main}, where we consider no more a single projection but a linear combination
$A = \sum a_j\Pi_j$, this will imply that
\begin{equation}\label{eq:beta_A_vs_pi}
\beta_A \ = \ \left(\sum_{j=1}^{N}\, a_j\, \frac{(\dim\mW_j)\, w(\lambda_j,\rho)}{n}\right)\,\pi\, . 
\end{equation}

It now remains to relate this to the mass. For this, we need a few more computations, that can be done for any dimension. Thus we shall drop here the assumption that $n\neq 4$. Let $(\mV,\rho)$ be the spinor representation $(\Sigma,\varsigma)$ in odd dimensions, and the positive half-spinor representation $(\Sigma_+,\varsigma_+)$ in even dimensions. In both cases, there are only two irreducible summands in the decomposition of the tensor product into irreducibles: one has
$\RM^n\otimes\Sigma = \TM\oplus\Sigma$ in odd dimensions and $\RM^n\otimes\Sigma_+ = \TM_+\oplus\Sigma_-$ in even dimensions, where $\TM$ and $\TM_{+}$ denote the twistor and positive half-twistor representations, and $\Sigma_-$ is the negative half-spinor representation.
 
To have uniform notations regardless of dimensions, we shall now denote by $\mV$ the relevant spinor or half-spinor representation. One may then write $\RM^n\otimes\mV = \mW_1\oplus\mW_2$, where $\mW_1$ is the relevant twistor representation and $\mW_2$ 
the relevant spinor representation appearing in this decomposition. 
 We now consider the endomorphism $A$ associated to this situation, \emph{i.e.}
$$a_1=-1, \quad a_2=(n-1), \quad A = (n-1)\Pi_2 - \Pi_1, $$ 
so that the Main Assumption of page \pageref{assumption} is satisfied. The classical Lichnerowicz formula 
$\mathfrak{D}^*\mathfrak{D} - \nabla^*\nabla = \tfrac{1}{4}\Scal$ can then be rewritten as
$$ (n-1) (P_2)^*P_2 - (P_1)^*P_1 = \tfrac{1}{4}\Scal \, .$$
(Note that one has to be careful with the choice of norms: here and everywhere else in the paper, each $\mW_j$ is endowed with 
the norm induced by the product norm on $\RM^n\otimes\mV$; as a result, $|\Pi_2(\nabla\psi)|^2 = \tfrac{1}{n}|\mathfrak{D}\psi|^2$
for any spinor field~$\psi$.)
We shall now re-interpret Witten's boundary term in our setting.
It has indeed been computed in \cite{andersson-dahl-alh} that for any $j$, $k$ in $\{1,...,n\}$ and any tangent vector
$X$,
\begin{align*}
\omega_j^k(X) \ & =  \ \ \frac12\, \langle\, (\nabla^0_XH)e_j - (\nabla^0_{He_j}H)H^{-1}X\, ,\, He_k \,\rangle \\
&  \ \ \ \ - \frac12\,\langle\, (\nabla^0_XH)e_k - (\nabla^0_{He_k}H)H^{-1}X\, ,\, He_j \,\rangle \\
&  \ \ \ \ - \frac12\,\langle\, (\nabla^0_{He_j}H)e_k - (\nabla^0_{He_k}H)e_j\, ,\, HX \,\rangle  .
\end{align*}
The decay assumption $\tau>\tfrac{n-2}{2}$ moreover implies that when passing to the limit $r\to\infty$,
the metric $\langle \cdot , \cdot \rangle$ may be replaced by  $\langle \cdot , \cdot \rangle_0$, all 
occurrences of $H$ at zeroth-order may be replaced by the identity, and $\nabla^0H$ may be replaced by $-\tfrac12\nabla^0g$, 
all of this without harm. More precisely, we shall use below the notation $U\simeq V$ to mean that $V$ is the only term in the expression of $U$ that contributes when integrating $U$ on larger and larger spheres. Equivalently, 
$$ U \simeq V \ \Leftrightarrow \ \lim_{r\to\infty} \int_{S_r} U = \lim_{r\to\infty} \int_{S_r} V. $$
Thus, 
\begin{align*}
\omega_j^k(X) \ & \simeq  \ \ \frac14\, \langle\, (\nabla^0_Xg)e_k - (\nabla^0_{e_k}g)X\, ,\, e_j \,\rangle_0 \\
&  \ \ \ \ - \frac12\,\langle\, (\nabla^0_Xg)e_j - (\nabla^0_{e_j}g)X\, ,\, e_k \,\rangle_0 \\
&  \ \ \ \ - \frac12\,\langle\, (\nabla^0_{e_k}g)e_j - (\nabla^0_{e_j}g)e_k\, ,\, X \,\rangle_0  .
\end{align*}
Taking into account the symmetry of $g$, this reduces to 
\begin{equation}\label{eq:gamma_de_g1}
\omega_j^k(X) \ \simeq \ \frac12\,\left[ \nabla^0_{e_j}g (X,e_k) \, - \, \nabla^0_{e_k}g (X,e_j)  \right] ,
\end{equation}
where the right-hand side can be equivalently described as the $2$-form-valued $1$-form 
\begin{equation}\label{eq:gamma_de_g2}
(X,Y,Z) \ \mapsto \ \frac12\,\left[ \nabla^0_{Y}g (X,Z) \, - \, \nabla^0_{Z}g (X,Y)  \right] .
\end{equation}
This formula, which is valid in any dimension, will also be the key element needed to fill the remaining gap in dimension $4$, a step that will be done at the end of this section.

We first handle the case when $n\neq 4$. As explained above, $\beta_A(\omega)$ is a multiple of the projection of $\omega$ onto the irreducible summand $\RM^n$ in $\RM^n\otimes\son$. The identification of this summand with $\RM^n$ can be obtained through the trace of $\omega$ in the variables $X$ and $Y$ in \eqref{eq:gamma_de_g2}. From this, one sees that 
$$ \beta_A(\omega) \ \simeq \ \textrm{\emph{const.}} \, (\diver_0 g - d\tr_0 g),$$ 
the right-hand side being the expected term in the boundary contribution as it also appears in the definition of the mass. In Witten's case, the proportionality
factor is known precisely (see section \ref{sec:witten}): the integrand of the boundary term now comes as
$$ \beta_A(\omega) \ \simeq \ \frac{\dim \Sigma_{(+)}}{4}\, \left( \diver_0 g - d \tr_0 g \right) ,$$
where we have used the notation $\Sigma_{(+)}$ to denote the spinor representation space $\Sigma$ in odd dimensions and the positive half-spinor representation space $\Sigma_+$ in even dimensions. Note also that we have taken a trace over an orthonormal basis of the relevant spinor bundle, so that $\dim \Sigma_{(+)}$ appears in
this formula.
Of course, this can be compared with the expression obtained before, \emph{i.e.}
$$ \beta_A(\omega) \ =\  \left( \frac{(n-1)}{n}\,(\dim\mW_2)\, w(\lambda_2,\varsigma_{(+)})\, 
- \,\frac{1}{n}\,(\dim\mW_1)\, w(\lambda_1,\varsigma_{(+)})\right)\pi(\omega) ,$$
where the same convention as above has been used for the notation $\varsigma_{(+)}$.

One has $\dim \Sigma = 2^{\lfloor\tfrac{n}{2}\rfloor}$ in odd dimensions and $\dim \Sigma_+ = \dim \Sigma_- = 2^{\tfrac{n}{2}-1}$ 
in even dimensions, and thus,
$$\dim \mW_1 = 2^{\lfloor\tfrac{n}{2}\rfloor-\delta}(n-1), \quad \dim \mW_2 = 2^{\lfloor\tfrac{n}{2}\rfloor-\delta}$$ 
where $\delta=0$ in odd dimensions and $\delta=1$ in even dimensions. It is moreover easily computed that, irrespective
of the parity of dimension,
$$ w(\varsigma,\varsigma) = w(\varsigma_-,\varsigma_+) =  -\frac{n-1}{2}, \quad 
w(\mathfrak{t},\varsigma) = w(\mathfrak{t}_+,\varsigma_+) = \frac12 ,$$
where $\mathfrak{t}_{(+)}$ is the relevant twistor representation (same convention as above). Thus,
\begin{align*}
\beta_A(\omega) \ & = \ - \, \left( \frac{(n-1)^2}{n}\,(\dim\mW_2)\, + \,\frac{1}{2n}\,(\dim\mW_1)\right)\, \pi(\omega) \\
& = \ - \,\frac{(n-1)}{2}\, 2^{\lfloor\tfrac{n}{2}\rfloor-\delta} \,\pi(\omega) \\
& = \ - \,\frac{(n-1)}{2}\,\left(\dim \Sigma_{(+)}\right) \,\pi(\omega) .
\end{align*} 
From this, one deduces immediately that
$$ \pi(\omega)\ \simeq -  \frac{1}{2(n-1)}\, \left( \diver_0 g - d \tr_0 g \right) .$$
Coming back to the case of a single projection $\Pi$ onto a summand $\mW$ of $\RM^n\otimes\mV$ with
$\mV$ being an arbitrary irreducible representation of $\son$, one concludes that
$$ \beta(\omega) \ \simeq \ -\, \frac{(\dim\mW)\, w(\lambda,\rho)}{2n(n-1)}\, \left( \diver_0 g - d \tr_0 g \right) , $$
and this ends the proof of Theorem \ref{theo:main} when $n\neq 4$.\qed

\smallskip

\subsection*{The proof in dimension $4$}
We shall now give the necessary arguments to get a complete proof in the missing four-dimensional case.
In this case, 
\begin{equation}\label{eq:dim4-deuxcopies1} 
\RM^4\otimes\Lambda^2\RM^4 = \mathbb{U}_+ \oplus \mathbb{U}_- \oplus \RM^4 \oplus \RM^4 
\end{equation}
where $\mathbb{U}_\pm$ are the irreducible representations whose dominant weights are $(2,\pm 1)$. Thus there are two copies of $\RM^4$ in $\RM^4\otimes\Lambda^2\RM^4$, the first 
one coming from the decomposition of $\RM^4\otimes\Lambda^2_+\RM^4$ and the second one coming from the decomposition of $\RM^4\otimes\Lambda^2_-\RM^4$ into irreducibles. It is thus not possible to conclude that $\beta_A$ must always be a multiple of a single projection as above. Indeed, there is a $2$-dimensional space of possible choices for the latter and one may only conclude that $\beta_A$ belongs to this space. One can be slightly more precise: if $(X,Y,Z) \mapsto \omega(X;Y,Z)$ is a generic element in $\RM^4\otimes\Lambda^2\RM^4$, then the maps 
$$\omega \mapsto T(\omega) = \tr_{12}(\omega) \ \textrm{ and } \ \omega \mapsto T'(\omega) = \tr_{12}(\star\omega) $$ 
provide a basis of the space under consideration. (Here $\tr_{12}$ denotes the trace in the $X,Y$-variables and $\star$ is the Hodge-star operator acting on the $2$-form part of elements in $\RM^4\otimes\Lambda^2\RM^4$, so that both $T$ and $T'$ are maps from $\RM^4\otimes\Lambda^2\RM^4$ to $\RM^4$.) As a result, one may write
\begin{equation}\label{eq:dim4-deuxcopies2} 
\RM^4\otimes\Lambda^2\RM^4 = \mathbb{U}_+ \oplus \mathbb{U}_- \oplus \mathbb{U}_0 \oplus \mathbb{U}_0'\ , 
\end{equation}
where $\mathbb{U}_0$, resp. $\mathbb{U}_0'$, is isomorphic to $\Im T$, resp. $\Im T'$, the latter two being copies of $\RM^4$ sitting diagonally in the last direct sum appearing in the r.h.s. of \eqref{eq:dim4-deuxcopies1}.

At this stage, it is important to notice that we don't need a general expression for $\beta_A(\omega)$ as we only want to compute it when $\omega \simeq \gamma(\nabla^0 g)$, where $\gamma$ is the map from $\RM^4\otimes\textrm{Sym}^2\RM^4$ to $\RM^4\otimes\Lambda^2\RM^4$ defined by 
$$ \gamma(\psi) (X;Y,Z) = \frac12 \left[ \psi(Y;X,Z) - \psi (Z;X,Y)\right] ,$$
see Equations (\ref{eq:beta_Pi_vs_pi}--\ref{eq:beta_A_vs_pi}). We can moreover write
$$ \RM^4\otimes\textrm{Sym}^2\RM^4 = \RM^4 \oplus \left(\RM^4\otimes\textrm{Sym}^2_0\RM^4\right).$$ 
Letting $\mathbb{S}$ be the irreducible representation of $\mathfrak{so}(4)$ whose dominant weight is $(3,0)$, the last factor in the previous formula decomposes as 
$$\RM^4\otimes\textrm{Sym}^2_0\RM^4 = \mathbb{S} \oplus \mathbb{U}_+ \oplus \mathbb{U}_- \oplus \RM^4,$$ 
and there are again two copies of $\RM^4$ inside $\RM^4\otimes\textrm{Sym}^2\RM^4$. 

But it turns out that $T'\circ\gamma$ is the zero map! Let us compute this on a decomposed element $h = u\otimes(v\otimes v)$ in $\RM^4\otimes\textrm{Sym}^2\RM^4$. It is obvious that $\gamma(h)=0$ if $u$ and $v$ are colinear, hence we may assume that $u$ is orthogonal to $v$. Thus, $\gamma(h) = v\otimes (u\wedge v)$ and $\star\gamma(h) = v\otimes \star (u\wedge v)$. As a result, 
$$T'\circ\gamma(h)=i_v\lrcorner\star(u\wedge v)=0.$$
This implies that the restriction of $\beta$ to $\Im\gamma$ is a multiple of a single projection, this time onto $\mathbb{U}_0$. Moreover, and since the identification of $\mathbb{U}_0$ with $\RM^4$ is done with the trace $T$, it is still true in dimension $4$ that
$$ \beta_A(\omega) \ \simeq \ \textrm{\emph{const.}} \, (\diver_0 g - d\tr_0 g)$$ 
for any $\omega \simeq \gamma(\nabla^0g)$. We now notice that $i(\alpha) = 2\, \gamma(\alpha\otimes\textrm{Id})$ for any $\alpha$, so that the image of $i$ also sits in $\mathbb{U}_0$. We may then define $\pi$ as the unique map from $\RM^4\otimes\Lambda^2\RM^4$ to $\RM^4$ which factors through the projection onto $\mathbb{U}_0$ and such that $\pi\circ i(\alpha) =\alpha$ for any $\alpha\in(\RM^n)^*$. Thus, similarly to the $n\neq 4$ case, 
$$\beta_{|\Im\gamma} = \textrm{\emph{const.}} \, \pi , $$
and we can proceed as above. Equations (\ref{eq:beta_Pi_vs_pi}--\ref{eq:beta_A_vs_pi}) are still valid if $n=4$ if one restricts $\beta$, resp. $\beta_A$, to the image of $\gamma$, and one may use the spinor case again to identify the value of the constant without changing any line. This ends the proof in dimension $4$.  \qed

\section{Examples of applications}\label{sec:appli}

We give here a choice of examples where the ideas above can be applied. Of course the
statements given here do not exhaust all possible 
applications. They however illustrate the fact that Theorem \ref{theo:main} is easy to use.

\subsection{A universal positive mass theorem} 
It has been remarked by Gau\-duchon \cite{pg-weyl} that
choosing $a_j = w(\lambda_j,\rho)$ (with notations as in the previous sections) always leads to a Weitzenb\"ock formula.
As a matter of fact, a trivial computation shows that the principal symbol $\sigma_{\xi}\left(\sum w(\lambda_j,\rho) (P_j)^*P_j\right)$
vanishes. Indeed,
$$\left\langle \sigma_{\xi}\left(\sum w(\lambda_j,\rho) (P_j)^*P_j\right)x\, , \,y\right\rangle 
= \langle B(\xi\otimes x),\xi\otimes y\rangle =0,$$ 
hence 
$\sum w(\lambda_j,\rho) (P_j)^*P_j$ is a zeroth-order operator. This formula is called the \emph{universal Weitzenb\"ock formula}
in \cite[Proposition 3.6, p. 519]{semmelmann-weingart-weitzenbock}.
One may now apply our main result to this case, and conclude that there is always an underlying universal positive mass theorem:

\begin{theo}\label{prop:universal}
Let $E$ be the natural bundle over a complete as\-ymptotic\-ally flat Riemannian manifold $(M,g)$
induced from an irreducible representation $(\mV,\rho)$ of $\son$. 
With the notations of section~\ref{sec:setting}, assume that 
the curvature operator 
$$ \mathcal{R} = \sum_{j=1}^N w(\lambda_j,\rho) (P_j)^*P_j $$
is nonpositive and that there exists a full set of solutions $(\sigma^{\kappa})_{1\leq \kappa\leq \dim\mV}$ 
of the equation 
$$ \sum_{w(\lambda_j,\rho)<0} w(\lambda_j,\rho) P_j\sigma^{\kappa} = 0 $$
that are asymptotic to an orthonomal basis $(\sigma_0^{\kappa})_{1\leq\kappa\leq\dim\mV}$ of $\mV$ and satisfy the estimates of Lemma \ref{lem-bord1}.
Then one has
$$ \sum_{\kappa=1}^{\dim\mV} \int_M \sum_{w(\lambda_j,\rho)> 0}\!\!\!\!\! w(\lambda_j,\rho)\, |P_j\sigma^{\kappa}|^2 
- \langle \sigma^{\kappa},\mathcal{R}\sigma^{\kappa}\rangle = 
\frac{c(\rho)}{n(n-1)}\,(\dim\mV)\, m(g),$$
where $c(\rho)$ is the Casimir operator of the representation $\rho$.
Hence the mass $m(g)$ is nonnegative.
\end{theo}

\begin{proof} The only missing point is the precise computation of the proportionality factor, which is \emph{a priori}
equal to 
$$ \frac{1}{2n(n-1)}\,\left(\sum_{j=1}^{N} w(\lambda_j,\rho)^2\,\dim\mW_j\right) .$$
The term inside brackets 
is obviously $\tr B^2$, and one may then apply \cite[\S 4, p.230]{dmjcpgmh}. We denote as in this paper 
$\ptr$ the partial trace of an operator from $\RM^n\otimes\mV$ to itself, \emph{i.e.} the endomorphism of $\mV$ obtained by
taking the trace 
on the $\RM^n$-factor. Then $\ptr(B^2)$ is nothing but twice the Casimir operator $c(\rho)$
of $(\mV,\rho)$ \cite[Equation~(4.1)]{dmjcpgmh}, and
$$  \sum_{j=1}^{N} w(\lambda_j,\rho)^2\,\dim\mW_j \ = \ \tr B^2 \ = \ (\dim\mV)\, \ptr(B^2) \ = \ 2\, (\dim\mV)\, c(\rho) ,$$
which proves the desired formula
\end{proof}

\subsection{The case when $N=2$} 
From \cite{dmjcpgmh,homma-casimir}, the number $N$ of irreducible summands in $\RM^n\otimes\mV$ is $N=2$, 
\emph{i.e.} $\RM^n\otimes\mV = \mW_1\oplus\mW_2$, in the followng two cases:
\begin{itemize}
\item[(i)] the dimension $n$ is odd and $\rho =(\tfrac12,\dots,\tfrac12)$,
\item[(ii)] the dimension $n$ is even and the dominant weight of the representation $\mV$ is 
$\rho = (k,\dots,k,\pm k)$ with $k$ an arbitrary nonzero integer or half-integer.
\end{itemize}
Case (i) is nothing but the spin representation in odd dimensions and one has $w_1=\tfrac12$, $w_2=-\tfrac{n-1}{2}$, as already noticed in the previous section. The cases covered by (ii) include the half-spin representations (when $k=\tfrac12$) as well as a lot of other representations, and the values of the weights are $w_1=k$ and $w_2=1-\tfrac{n}{2}-k$. In both cases (i) and (ii), one has $w_1>0>w_2$. Since $N=2$, there is only one non trivial Weitzenb\"ock formula (up to constant multiples), which is the universal one:
$$ w_1\,(P_1)^* P_1 + w_2\,(P_2)^*P_2 = \mathcal{R} .$$
The curvature contribution has been computed for instance in \cite{pg-weyl}:
$$ \mathcal{R} = \mathcal{R}^{\rho} = \sum_{i<j} \rho(e_i\wedge e_j)\circ \rho\left(R^g (e_i\wedge e_j)\right) $$
where $R^g$ is the curvature operator of the metric $g$.
In case (i), $(\mV,\rho)$ is the spin representation $(\Sigma,\varsigma)$, $P_1$ is a multiple of the twistor operator, and $P_2$ is a multiple of the Dirac operator; in case (ii) and if $k=\tfrac12$, $(\mV,\rho)$ is one of the half-spin representations $(\Sigma,\varsigma_\pm)$, and $P_1$, resp. $P_2$, is again a multiple of the twistor, resp. Dirac, operator (these computations have been done in section~4). From Theorem \ref{prop:universal}, one has in general
$$ \sum_{\kappa=1}^{\dim\mV} \int_M w_1 |P_1\sigma^{\kappa}|^2 + w_2 |P_2\sigma^{\kappa}|^2 - \langle \sigma^{\kappa}, \mathcal{R}^{\rho} \sigma^{\kappa}\rangle = \frac{c(\rho)}{n(n-1)}(\dim\mV)\, m(g). $$
This can be used in two different ways: 
\begin{itemize}
\item[(a)] either $\mathcal{R}^{\rho}$ is nonnegative and there exists enough asymptotically constant elements in $\ker P_1$, and we get a \emph{negative} mass theorem,
\item[(b)] or $\mathcal{R}^{\rho}$ is nonpositive and there exists enough asymptotically constant elements in $\ker P_2$, and we get a \emph{positive} mass theorem.
\end{itemize}
This is of course consistent with the classical spinorial proof of the positive mass theorem \cite{witten}: the reader
has to keep in mind that $\mathcal{R}^{\varsigma}=-\tfrac18\Scal$ when the Lichnerowicz formula for spinors is written as the universal Weitzenb\"ock formula. 
In this spinorial proof, the form $\mathcal{Q}$ is moreover \emph{diagonal}. Some further look reveals that
this is a consequence of the fact that the representation $\varsigma$ is induced by a representation of algebras (of the Clifford algebra). Thus, this is a special feature of the spinor bundle which cannot be hoped for in the general case.

\subsection{The case of forms} 
As a further example, one may 
look at the case of $p$-forms with $p<\tfrac{n}{2}$. One has in this case $\rho = (1,...,1,0,...,0)$ where the number 
of $1$'s is equal to $p$, and the number of summands is equal to $N=3$. The case where $p=\tfrac{n}{2}$ in even dimensions, 
\emph{i.e.} $\rho = (1,...,1,\pm 1)$ is special, as it has $N=2$; it has thus been considered in the previous section. 

One may then apply the previous Proposition: the only possible Weitzenb\"ock formula is the universal one and 
the conformal weights are $w_1=1$, $w_2=-p$, and $w_3=-(n-p)$. 
There are again two possible operators: either $P_+=P_1$ (and, up to a constant, this is the conformal Killing operator), or 
$P_+=\sqrt{-w_2}P_2 + \sqrt{-w_3}P_2$ (and, up to a constant again, this is
the classical Hodge-de Rham operator $d+\delta$). 
The curvature term $\mathcal{R}$ relative to $p$-forms is well-known, see for instance \cite{homma-casimir,labbi}.
The two possible boundary 
terms may also be computed from Theorem~\ref{prop:universal} and this leads to a positive mass theorem
for the latter and a negative one for the former (under the condition that $\mathcal{R}_p$ is nonpositive in the first case and 
nonnegative in the second).

\subsection{Conformally flat asymptotically flat manifolds}
We shall present here a case where the discussion of any of the last two sections effectively leads to positive or negative mass theorems. We consider the case where
$M=\RM^n$ and $g$ is globally conformal to the Euclidean metric $g_0$. We may thus write
$$ g = u^{-2}\, g_{0} $$
for some positive function $u$, such that $g$ is asymptotically flat in the sense of \ref{def_aflat}, \emph{i.e.}
$$ u - 1 = O(r^{-\tau}), \quad \partial_k u = O(r^{-\tau-1}), 
\quad \partial_k\partial_{\ell}u = O(r^{-\tau-2}) $$
for some $\tau>\tfrac{n-2}{2}$.  

Given any representation $(\mV,\rho)$, it is known that the Stein-Weiss operators $P_j=\Pi_j\circ\nabla$ (corresponding to a single projection) are conformally covariant with respect to the corresponding conformal weight \cite{fegan,pg-weyl}. This implies that, for any section $\bar{\sigma}$ of the bundle $E$,
$$ P^{0}_j(\bar{\sigma}) = 0 \ \Rightarrow \ P_j(u^{-w}\bar{\sigma}) = 0$$
with $w=w(\lambda_j, \rho)$. As constant sections $(\sigma_0^{\kappa})_{1\leq\kappa\leq\dim\mV}$ of $E$ over the Euclidean space are elements of the kernel of each $P^{0}_j$, the remark above yields a family $(\sigma^{\kappa}=u^{-w}\sigma_0^{\kappa})_{1\leq \kappa\leq\dim\mV}$ of elements of the kernel of each $P_j$, with $\sigma^{\kappa} \to_{\infty} \sigma_0^{\kappa}$.

Thus, one may obtain positivity statements on the mass if there exists a Weitzenb\"ock formula where either $P_+$ or $P_-$ is obtained from a single projection. This is for instance the case when $N=2$ or for forms, as we have seen in the last sections, and one obtains the following theorems, keeping the same notation as in the previous sections.

\begin{theo}
Let $g = u^{-2}\, g_{0}$ be an asymptotically flat and conformally flat metric on $\RM^n$ such that its mass is defined, and let $(\mV,\rho)$ be any representation of the special orthogonal group $\Spon(n)$ or its universal covering $\Spin(n)$ such that $N=2$. Then the following hold:
\begin{itemize}
\item[(i)] if $\mathcal{R}^{\rho}$ is nonpositive, then the mass $m(g)$ is nonnegative;
\item[(ii)] if $\mathcal{R}^{\rho}$ is nonnegative, then the mass $m(g)$ is nonpositive.
\end{itemize}
\end{theo}

In the case of forms, one may only obtain negative mass theorems as our analysis yields elements in the kernel of operators associated to a single projection only (Stein-Weiss operators). When the metric is
conformally flat, the curvature term is a positive multiple of 
$$\mathcal{R}_p = - \left( \frac{n-2p}{(p-1)!} \, g^{p-1}Z^g + \frac{2(n-p)}{(p-1)!}\,\Scal^g\,\textrm{Id}\right) $$
where $Z^g$ is the tracefree Ricci tensor and $g^{p-1}Z^g$ stands for its suspension as a symmetric operator acting on $p$-forms, see \cite{labbi}.

\begin{theo}
Let $g = u^{-2}\, g_{0}$ be an asymptotically flat and conformally flat metric on $\RM^n$ such that its mass is defined, and $1\leq p < \tfrac{n}{2}$. If $\mathcal{R}_p$ is nonnegative, then the mass $m(g)$ is nonpositive.
\end{theo}

\bigskip

\nocite{*}
\bibliographystyle{cdraifplain}

\end{document}